



\documentclass[twocolumn]{article}

\usepackage{abstract} 

\usepackage{amsmath} 
\usepackage{amssymb}
\usepackage{amsthm}

\newtheorem{theorem}[equation]{Theorem}
\newtheorem{proposition}[equation]{Proposition}


\begin{document}

\title{Effective conductivity of a singularly perturbed periodic two-phase composite with imperfect thermal contact at the two-phase interface}

\author{Matteo Dalla Riva \and Paolo Musolino}

\date{}

\twocolumn[ 

\maketitle

\begin{onecolabstract} 
We consider the asymptotic behaviour of the effective thermal conductivity of a two-phase composite obtained by introducing into an infinite homogeneous matrix a periodic set of inclusions of a different material and of size proportional to a positive parameter $\epsilon$. We are interested in the case of imperfect  thermal contact at the two-phase interface.  Under suitable assumptions, we show that the effective thermal conductivity can be continued real analytically in the parameter $\epsilon$ around the degenerate value $\epsilon=0$, in correspondence of which the inclusions collapse to points.  The results presented here are obtained by means of an approach based on functional analysis and potential theory and are also part of a forthcoming paper by the authors.
\end{onecolabstract}

\vspace{11pt}

\noindent
{\bf Keywords:} effective conductivity, periodic composite, non-ideal contact conditions, transmission problem, singularly perturbed domain

\noindent
{\bf PACS:} 88.30.mj, 44.10.+i, 44.35.+c, 02.30.Em, 02.30.Jr, 02.60.Lj

\vspace{11pt}

]


\section{Introduction}
This note is devoted to the analysis of the effective thermal conductivity of a two-phase periodic composite, consisting of a matrix and of a periodic set of inclusions, with thermal resistance at the two-phase interface. Two possibly different materials fill the matrix and the inclusions. We assume that these materials are homogeneous and isotropic heat conductors. As a consequence, the conductivity of each of these two materials is represented by a positive scalar. Moreover,  we assume that the size of each inclusion  is proportional to a certain parameter $\epsilon>0$, and that as $\epsilon$ tends to zero each inclusion collapses to a point. The normal component of the heat flux is assumed to be continuous at the composite interface, while we impose that the temperature field displays a jump proportional to the normal heat flux by means of a parameter $\rho(\epsilon)>0$. Such a discontinuity in the temperature field has been  largely investigated since 1941, when Kapitza carried out the first systematic study of thermal interface behaviour in liquid helium (see, {\it e.g.},  Swartz and Pohl \cite{SwPo89}, Lipton \cite{Li98} and references therein). In this note, we investigate the asymptotic behaviour of the effective thermal conductivity when the positive parameter $\epsilon$ is close to the degenerate value $0$. Benveniste and Miloh in \cite{BeMi86} introduced the expression which defines the effective conductivity of a composite with imperfect contact conditions by generalizing the dual theory of the effective behaviour of composites with perfect contact (see also Benveniste \cite{Be86} and for a review of the subject, {\it e.g.}, Drygas and Mityushev \cite{DrMi09}). By the argument of Benveniste and Miloh, in order to evaluate the effective conductivity, one has to study the thermal distribution of the composite when so called ``homogeneous conditions'' are prescribed.  
  As a consequence, we introduce a particular transmission problem with non-ideal contact conditions where we impose that the temperature field displays a fixed jump along a prescribed direction and is periodic in all the other directions (cf.~problem \eqref{bvpe} below). 

We fix once for all 
\[
n\in {\mathbb{N}}\setminus\{0,1 \}\,,\qquad \alpha \in ]0,1[\,.
\]
Then we introduce the periodicity cell $Q$ by setting
\[
Q\equiv ]0,1[^n\,.
\]
 We fix a bounded open connected subset $\Omega$ of $\mathbb{R}^{n}$ of  the Schauder class  $C^{1,\alpha}$ such that the complementary set of its closure $\mathrm{cl}\Omega$ is connected and that the origin $0$ of $\mathbb{R}^n$ belongs to $\Omega$. We note that, by requiring that $\mathbb{R}^n \setminus \mathrm{cl}\Omega$ is connected, we assume that the set $\Omega$ does not have holes. The set $\Omega$ represents the ``shape'' of the inclusions. 

Next we fix a point $p$ in the fundamental cell $Q$ and for each $\epsilon \in \mathbb{R}$, we set
\[
\Omega_{p,\epsilon}\equiv p+\epsilon\Omega \,.
\]

Clearly, there exists $\epsilon_{0}>0$ small enough, such that
\[
{\mathrm{cl}}\Omega_{p,\epsilon} \subseteq Q
\qquad\forall \epsilon\in]-\epsilon_{0},\epsilon_{0}[\,.
\]
For $\epsilon \in ]0,\epsilon_0[$, the set $\Omega_{p,\epsilon}$ represents the inclusion in the fundamental cell $Q$. We note that for $\epsilon=0$ the set $\Omega_{p,\epsilon}$ degenerates into the set $\{p\}$.

We are now in the position to define the periodic domains $\mathbb{S}[\epsilon]$ and $\mathbb{S}[\epsilon]^-$ by setting
\begin{align}
&{\mathbb{S}}[\epsilon]\equiv\bigcup_{z\in {\mathbb{Z}}^{n}}\left(
z+\Omega_{p,\epsilon}\right)= \mathbb{Z}^n + \Omega_{p,\epsilon}
\,,\nonumber \\
&{\mathbb{S}}[\epsilon]^{-}\equiv
{\mathbb{R}}^{n}\setminus{\mathrm{cl}}{\mathbb{S}} [\epsilon]\,,\nonumber 
\end{align}
for all $\epsilon\in ]-\epsilon_{0},\epsilon_{0}[ $. We observe that for $\epsilon=0$ the sets $\mathbb{S}[\epsilon]$ and $\mathbb{S}[\epsilon]^-$ degenerate into $p+\mathbb{Z}^n$ and into $\mathbb{R}^n \setminus (p+\mathbb{Z}^n)$, respectively.

Next, we take two positive constants $\lambda^+$, $\lambda^-$ and a function $\rho$ from $]0,\epsilon_{0}[$ to $]0,+\infty[$. For each $j \in \{1,\dots,n\}$ and $\epsilon \in ]0,\epsilon_0[$ we consider the following transmission problem for a pair of functions $(u^+_j,u^-_j)\in C^{1}(\mathrm{cl}\mathbb{S}[\epsilon])\times C^{1}(\mathrm{cl}\mathbb{S}[\epsilon]^{-})$:
\begin{equation}
\label{bvpe}
\left\{
\begin{array}{ll}
\Delta u^+_j=0 & {\mathrm{in}}\ {\mathbb{S}}[\epsilon]\,,\\
\Delta u^-_j=0 & {\mathrm{in}}\ {\mathbb{S}}[\epsilon]^{-}\,,
\\
u^+_j(x+e_h)=u^+_j(x)+\delta_{h,j} & \forall x \in \mathrm{cl}\mathbb{S}[\epsilon]\, ,\\
& \forall h \in \{1,\dots,n\}\,,\\
u^-_j(x+e_h)=u^-_j(x)+\delta_{h,j} & \forall x \in \mathrm{cl}\mathbb{S}[\epsilon]^{-}\, ,\\
&  \forall h \in \{1,\dots,n\} \,,\\
\lambda^-\frac{\partial u^-_j}{\partial\nu_{ \Omega_{p,\epsilon} }}(x)
=
\lambda^+\frac{\partial u^+_j}{\partial\nu_{ \Omega_{p,\epsilon} }}(x)& \forall x\in  \partial\Omega_{p,\epsilon}\,,\\
\lambda^+\frac{\partial u^+_j}{\partial\nu_{ \Omega_{p,\epsilon} }}(x)&\\\qquad=\frac{1}{\rho(\epsilon)}\bigl(u^-_j(x)-u^+_j(x)\bigr)& \forall x\in  \partial\Omega_{p,\epsilon}\,,\\
\int_{\partial \Omega_{p,\epsilon}}u^+_j(x)\, d\sigma_x=0\, ,
\end{array}
\right.
\end{equation}
where $\nu_{ \Omega_{p,\epsilon}}$ denotes the outward unit normal to $ \partial
\Omega_{p,\epsilon}$, and $\delta_{h,j}=1$ if $h=j$, $\delta_{h,j}=0$ if 
$h\neq j$ for all $h,j \in \{1,\dots,n\}$. Here $\{e_{1}$,\dots, $e_{n}\}$ denotes the canonical basis of ${\mathbb{R}}^{n}$. 

The functions $u^+_j$ and $u^-_j$ represent the temperature field in the inclusions occupying ${\mathbb{S}}[\epsilon]$ and in the matrix occupying ${\mathbb{S}}[\epsilon]^{-}$, respectively.  The parameters $\lambda^+$ and $\lambda^-$ represent the thermal conductivity of the materials which fill  the inclusions and the matrix, respectively, whereas the parameter $\rho(\epsilon)$ plays the role of the interfacial thermal resistivity. The fifth condition in \eqref{bvpe} means that the normal heat flux is continuous across the two-phase interface.  The sixth condition says that the temperature field has a jump proportional to the  normal heat flux by means of the parameter $\rho(\epsilon)$. The third and fourth conditions in \eqref{bvpe}  imply that the temperature distributions  $u^+_j$ and $u^-_j$ have a jump equal to $1$ in the direction $e_j$ and are periodic in all the other directions. Finally, the seventh condition in \eqref{bvpe} is an auxiliary condition which we introduce  in order to have uniqueness for the solution of problem \eqref{bvpe}. Since the effective conductivity is invariant for constant modifications of the temperature field, such a condition does not interfere in its definition. 

Boundary value problems of this type have been largely investigated in connection with the theory of heat conduction in two-phase periodic composites with imperfect contact conditions. Here we mention, \textit{e.g.}, Castro and Pesetskaya \cite{CaPe10}, Castro, Pesetskaya, and Rogosin \cite{CaPeRo09},  Drygas and Mityushev \cite{DrMi09}, Lipton \cite{Li98}, Mityushev \cite{Mi01}.

Boundary value problem \eqref{bvpe} is clearly singular for $\epsilon=0$. Indeed, both the domains $\mathbb{S}[\epsilon]$ and $\mathbb{S}[\epsilon]^-$ are degenerate when $\epsilon=0$. Moreover, the presence of the factor $\frac{1}{\rho(\epsilon)}$ may produce a further singularity if $\rho(\epsilon)\to 0$ as $\epsilon$ tends to $0^+$. In this note, we consider the case in which the limit
\[
r_{\ast}\equiv\lim_{\epsilon\to 0^+}\frac{ \epsilon}{\rho(\epsilon)}
\]
exists  finite in  $\mathbb{R}$. We emphasize that we make no regularity assumption on the function $\rho$. 

As is well knwon, for each $\epsilon \in ]0,\epsilon_0[$,  problem \eqref{bvpe} has a unique solution in $C^{1}(\mathrm{cl}\mathbb{S}[\epsilon])\times C^{1}(\mathrm{cl}\mathbb{S}[\epsilon]^{-})$. We denote such a solution by $(u^+_j[\epsilon], u^-_j[\epsilon])$. Then we introduce the effective conductivity matrix $\lambda^{\mathrm{eff}}[\epsilon]$ with $(k,j)$-entry  $\lambda^{\mathrm{eff}}_{kj}[\epsilon]$ defined by
\[
\begin{split}
\lambda^{\mathrm{eff}}_{kj}[\epsilon]&\equiv \lambda^+\int_{\Omega_{p,\epsilon}}\frac{\partial u^+_j[\epsilon](x)}{\partial x_k} \, dx\\
& + \lambda^-\int_{Q\setminus \mathrm{cl}\Omega_{p,\epsilon}}\frac{\partial u^-_j[\epsilon](x)}{\partial x_k} \, dx \,,
\end{split}
\]
for all $(k,j) \in \{1,\dots,n\}^2$ and $\epsilon \in ]0,\epsilon_0[$ (cf.~Benveniste \cite{Be86} and Benveniste and Miloh \cite{BeMi86}). 

Then if $(k,j) \in \{1,\dots,n\}^2$ it is natural to ask the following question.
\begin{equation}\label{question}
\begin{split}
&\text{What  can be said on the map 
$\epsilon\mapsto \lambda^{\mathrm{eff}}_{kj} [\epsilon]$} \\
&\text{when $\epsilon$ is close to $0$ and 
positive?} 
\end{split}
\end{equation}

Questions of this type are not new and have long been investigated with the methods of Asymptotic Analysis. 

Thus for example,  one could resort to the techniques of Asymptotic Analysis and may succeed  to write out an asymptotic  expansion for $\lambda^{\mathrm{eff}}_{kj} [\epsilon]$ of the type
\[
\lambda^{\mathrm{eff}}_{kj} [\epsilon]=P(\epsilon)+R(\epsilon) \qquad \text{as $\epsilon \to 0^+$},
\]
where $P$ is a regular function and $R$ a remainder which is smaller than a positive known function of $\epsilon$.

Here, we mention, as an example, the works of Ammari and Kang \cite[Ch.~5]{AmKa07}, Ammari, Kang, and Touibi \cite{AmKaTo05}, Maz'ya, Nazarov, and Plamenewskij \cite{MaNaPl00i, MaNaPl00ii}, 
Maz'ya, Movchan, and Nieves \cite{MaMoNi11} (for further references see, {\it e.g.}, Lanza de Cristoforis and the second named author \cite{LaMu12}). 

In this note, instead, we wish to answer to the question in \eqref{question} by exploiting the different approach proposed by Lanza de Cristoforis. Namely, our aim is to represent $\lambda^{\mathrm{eff}}_{kj} [\epsilon]$ when $\epsilon$ is small and positive in terms of real analytic functions of the variable $\epsilon$ defined on a whole neighbourhood of $0$, and of  explicitly known functions of $\epsilon$. This approach does have its advantages. Indeed, if we know, for example, that there exist $\epsilon' \in ]0,\epsilon_0[$ and a real analytic function $h$ from $]-\epsilon',\epsilon'[$ to $\mathbb{R}$ such that
\[
\lambda^{\mathrm{eff}}_{kj} [\epsilon]=h(\epsilon) \qquad \forall \epsilon \in ]0,\epsilon'[\, ,
\]
then we can deduce the existence of $\epsilon'' \in ]0,\epsilon'[$ and of a sequence $\{a_j\}_{j=0}^{+\infty}$ of real numbers, such that
\[
\lambda^{\mathrm{eff}}_{kj} [\epsilon]=\sum_{j=0}^{+\infty}a_j \epsilon^j \qquad \forall \epsilon \in ]0,\epsilon''[\, ,
\]
where the series in the right hand side converges absolutely on $]-\epsilon'',\epsilon''[$. As we shall see, this is the case if $\epsilon/\rho(\epsilon)$ has a real analytic continuation around $0$ (for example if $\rho(\epsilon)=\epsilon$ or $\rho$ is constant).

Such a project has been carried out in the case of a simple hole, \textit{e.g.}, in Lanza \cite{La10} (see also \cite{DaMu12}), and has later been    extended to problems related to the system of equations of the linearized elasticity in \cite{DaLa10a, DaLa10b, DaLa11} and to the Stokes system in \cite{Da11}, and to the case of problems  in an infinite periodically perforated domain in \cite{LaMu12, Mu12a}.

We also mention that  boundary value problems in domains with periodic inclusions have been  analysed, at least for the two dimensional case, with the so-called method of functional equations (cf., {\it e.g.}, Castro and Pesetskaya \cite{CaPe10}, Castro, Pesetskaya, and Rogosin \cite{CaPeRo09},   Drygas and Mityushev \cite{DrMi09}, Mityushev \cite{Mi01}, Rogosin, Dubatovskaya, and Pesetskaya \cite{RoDuPe09}).
 
\section{Strategy} 

We briefly outline our strategy. First of all we recall that boundary value problem \eqref{bvpe}, which we consider only for positive $\epsilon$, is singular for $\epsilon=0$. Then, if $\epsilon$ is in $]0,\epsilon_0[$ we can convert problem \eqref{bvpe} into an equivalent system of integral equations defined on the $\epsilon$-dependent domain $\partial \Omega_{p,\epsilon}$ by exploiting periodic potential theory (cf., \textit{e.g.}, \cite{LaMu11}). Then, by an appropriate change of functional variables, we can desingularize the problem and obtain an equivalent system of integral equations defined on the fixed domain $\partial \Omega$. By means of the Implicit Function Theorem for real analytic maps in Banach spaces, we can analyse the dependence upon $\epsilon$ of the solutions of the system of integral equations and we can prove our main results. Further details will be presented in a forthcoming paper by the authors (see \cite{DaMu12a}).

\section{Main results}

\begin{theorem}
Let $(k,j) \in \{1,\dots,n\}^2$. Then there exist $\epsilon_1 \in ]0,\epsilon_0[$, an open neighbourhood $\mathcal{U}_{r_{\ast}}$ of $r_{\ast}$, and a real analytic function $\Lambda_{kj}$ from $]-\epsilon_1,\epsilon_1[\times{\mathcal{U}}_{r_{\ast}}$ to $\mathbb{R}$ such that $\epsilon/\rho(\epsilon) \in \mathcal{U}_{r_\ast}$ for all $\epsilon \in ]0,\epsilon_1[$ and such that 
\begin{equation}\label{eq:S0} 
\begin{split}
\lambda^{\mathrm{eff}}_{kj}[\epsilon]=&\lambda^-\delta_{k,j}+ \epsilon^n
\Lambda_{kj}\Bigl[\epsilon, \frac{\epsilon}{\rho(\epsilon)}\Bigr]\, ,
\end{split}
\end{equation}
for all $\epsilon \in ]0,\epsilon_1[$.
\end{theorem}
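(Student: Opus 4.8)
The plan is to convert the singular transmission problem \eqref{bvpe} into a system of integral equations on the fixed domain $\partial\Omega$ and then invoke the Implicit Function Theorem for real analytic maps. First I would represent the solution pair $(u^+_j,u^-_j)$ by means of periodic single layer potentials carried by the boundary $\partial\Omega_{p,\epsilon}$, supplemented by suitable constants to account for the linear ``macroscopic'' part enforcing the jump $\delta_{h,j}$ along $e_j$ and the periodicity in the remaining directions. The idea is to write $u^+_j$ and $u^-_j$ as the sum of the $j$-th coordinate function $x_j$ (which produces the prescribed jump and is harmonic) plus periodic corrections expressed through the periodic analogue of the fundamental solution of the Laplacian, following the periodic potential theory of \cite{LaMu11}. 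The transmission conditions (the continuity of the normal heat flux and the Kapitza-type temperature jump governed by $\rho(\epsilon)$) then translate into a coupled system of boundary integral equations for the unknown densities on $\partial\Omega_{p,\epsilon}$.

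Next I would desingularize: by the rescaling $x = p + \epsilon t$ with $t\in\partial\Omega$, every integral over the shrinking boundary $\partial\Omega_{p,\epsilon}$ becomes an integral over the fixed boundary $\partial\Omega$, at the cost of explicit powers of $\epsilon$ arising from the scaling of the area element $d\sigma$ and from the homogeneity of the fundamental solution of $\Delta$. The key point here is that the periodic kernel splits into its singular part, which is exactly the (homogeneous) fundamental solution of the Laplacian and scales cleanly under $x\mapsto p+\epsilon t$, plus a smooth remainder that depends real analytically on $\epsilon$ through the evaluation point $p+\epsilon t$ in a whole neighbourhood of $\epsilon=0$. The factor $1/\rho(\epsilon)$ in the sixth condition of \eqref{bvpe} is precisely where the variable $\epsilon/\rho(\epsilon)$ enters: after multiplying through by an appropriate power of $\epsilon$, the coefficient $\epsilon/\rho(\epsilon)$ appears as an independent parameter, which I would replace by a free variable $r$ ranging in a neighbourhood $\mathcal U_{r_\ast}$ of $r_\ast$. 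This yields a rescaled system $\mathcal F[\epsilon, r, (\text{densities})]=0$ whose left-hand side is a real analytic map of $(\epsilon,r)$ and the densities, now defined on a neighbourhood of $\epsilon=0$ including negative and zero values of $\epsilon$.

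I would then verify that at the degenerate configuration $\epsilon=0$, $r=r_\ast$ the system $\mathcal F=0$ admits a (unique) solution and that the partial differential of $\mathcal F$ with respect to the density variables is a linear homeomorphism of the relevant Schauder spaces onto itself. Establishing this invertibility is what I expect to be \emph{the main obstacle}: one must show that the limiting integral operator at $\epsilon=0$ — which is governed by the non-periodic single layer potential on the fixed boundary $\partial\Omega$ together with the limiting contact condition controlled by $r_\ast$ — is an isomorphism, and this typically reduces to a uniqueness argument for an auxiliary (non-periodic, exterior/interior) transmission problem on $\Omega$ and $\mathbb R^n\setminus\mathrm{cl}\Omega$. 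The Fredholm theory for layer potential operators of this type, together with an energy/uniqueness argument exploiting the positivity of $\lambda^+$, $\lambda^-$ and of $r_\ast$, should deliver injectivity and hence, by Fredholmness, the required bijectivity. Granted this, the real analytic Implicit Function Theorem in Banach spaces produces $\epsilon_1>0$ and a real analytic map $(\epsilon,r)\mapsto(\text{densities})$ on $]-\epsilon_1,\epsilon_1[\times\mathcal U_{r_\ast}$.

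Finally I would feed these real analytic densities back into the definition of $\lambda^{\mathrm{eff}}_{kj}[\epsilon]$. The representation of $u^\pm_j[\epsilon]$ in terms of the rescaled densities, inserted into the integrals defining the effective conductivity, produces a leading term $\lambda^-\delta_{k,j}$ — the conductivity of the bulk matrix material, corresponding to the inclusions having collapsed to points — plus a correction. Carrying out the change of variables $x=p+\epsilon t$ in both integrals over $\Omega_{p,\epsilon}$ and over $Q\setminus\mathrm{cl}\Omega_{p,\epsilon}$, and tracking the powers of $\epsilon$ coming from the volume element and from the gradients of the rescaled potentials, one extracts a global factor $\epsilon^n$ multiplying a quantity that is real analytic in $(\epsilon,\epsilon/\rho(\epsilon))$. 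Setting this quantity equal to $\Lambda_{kj}[\epsilon,\epsilon/\rho(\epsilon)]$ and noting that $\epsilon/\rho(\epsilon)\in\mathcal U_{r_\ast}$ for $\epsilon\in\,]0,\epsilon_1[$ by the definition of $r_\ast$, we obtain exactly \eqref{eq:S0}, completing the proof.
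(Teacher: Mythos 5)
Your proposal follows essentially the same route the paper outlines for this theorem: reduction to a system of integral equations via periodic potential theory (cf.\ \cite{LaMu11}), desingularization by the rescaling $x=p+\epsilon t$ with $\epsilon/\rho(\epsilon)$ promoted to an independent variable $r$ near $r_\ast$, the Implicit Function Theorem for real analytic maps in Banach spaces, and substitution of the resulting analytic densities back into the definition of $\lambda^{\mathrm{eff}}_{kj}$; the paper itself gives only this strategy sketch and defers all details to \cite{DaMu12a}. One minor caution: since $r_\ast=\lim_{\epsilon\to 0^+}\epsilon/\rho(\epsilon)$ may well equal $0$ (a case the paper treats explicitly in Proposition \ref{L00}), your energy/uniqueness argument for the invertibility of the limiting operator should rely on $r_\ast\ge 0$ rather than on strict positivity of $r_\ast$.
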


For a proof, we refer to \cite{DaMu12a}. Here, we note that if $\epsilon/\rho(\epsilon)$ has a real analytic continuation around $0$, then the term in the right hand side of equality \eqref{eq:S0} defines a real analytic function of the variable $\epsilon$ in the whole of a neighbourhood of $0$. Accordingly, the term in the left hand side of equality \eqref{eq:S0}, which is defined only for positive values of $\epsilon$, can be continued real analytically for $\epsilon \leq 0$. As a consequence, $\lambda^{\mathrm{eff}}_{kj}[\epsilon]$ can be expressed for $\epsilon$ small and positive in terms of a power series which converges absolutely on a whole neighbourhood of $0$. 

Moreover, we give in the following Theorem \ref{L0rast} more information on $\lambda^{\mathrm{eff}}_{kj}[\epsilon]$ for $\epsilon$ close to $0$  by expressing  $\Lambda_{kj}[0,r_\ast]$ by means of a certain quantity related to the solutions of a limiting transmission problem (for a proof we refer to \cite{DaMu12a}).

\begin{theorem}\label{L0rast}
Let $(k,j)\in\{1,\dots,n\}^2$. Then 
\[
\begin{split}
\Lambda_{kj}[0,r_{\ast}]=&\lambda^+\int_{\partial \Omega}  \tilde{u}_j^+ (t)(\nu_{\Omega}(t))_k\, d\sigma_t\\&-\lambda^-\int_{ \partial \Omega}  \tilde{u}_j^- (t)(\nu_{\Omega}(t))_k\, d\sigma_t\\
&+(\lambda^+-\lambda^-)|\Omega|_n\;\delta_{k,j}\,,
\end{split}
\]  where $|\Omega|_n$ denotes the $n$-dimensional measure of $\Omega$, and where $(\tilde{u}_j^+,\tilde{u}_j^-)$ is the unique solution in $C^{1}(\mathrm{cl}\Omega)\times C^{1}(\mathbb{R}^n\setminus\Omega)$ of the following transmission problem
\[
\left\{
\begin{array}{ll}
\Delta \tilde{u}_j^+=0 & {\mathrm{in}}\ \Omega\,,\\
\Delta \tilde{u}_j^-=0 & {\mathrm{in}}\ \mathbb{R}^n\setminus\mathrm{cl}\Omega\,,
\\
\lambda^-\frac{\partial \tilde{u}_j^-}{\partial\nu_{ \Omega}}(x)&\\ =\lambda^+\frac{\partial\tilde{u}_j^+}{\partial\nu_{ \Omega}}(x)+(\lambda^+-\lambda^-)(\nu_\Omega(x))_j& \forall x\in  \partial\Omega\,,\\
\lambda^+\frac{\partial\tilde{u}_j^+}{\partial\nu_{ \Omega}}(x)&\\   =r_{\ast}\Bigl(\tilde{u}_j^-(x)-\tilde{u}_j^+(x)\Bigr)-\lambda^+(\nu_{\Omega}(x))_j& \forall x\in  \partial\Omega\,,\\
\int_{\partial \Omega}\tilde{u}_j^+(x)\, d\sigma_x=0\, ,\\
\int_{\partial \Omega}\tilde{u}_j^-(x)\, d\sigma_x=0\, ,\\
\lim_{x\to \infty}\tilde{u}_j^-(x)\in\mathbb{R}\, .
\end{array}
\right.
\]
\end{theorem}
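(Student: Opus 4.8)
The plan is to start from the representation \eqref{eq:S0} furnished by the first theorem and to identify $\Lambda_{kj}[0,r_\ast]$ by extracting the leading term of $\lambda^{\mathrm{eff}}_{kj}[\epsilon]$ as $\epsilon\to 0^+$. First I would convert the two volume integrals defining $\lambda^{\mathrm{eff}}_{kj}[\epsilon]$ into boundary integrals by the Divergence Theorem. The integral over the inclusion yields $\lambda^+\int_{\partial\Omega_{p,\epsilon}}u^+_j[\epsilon]\,(\nu_{\Omega_{p,\epsilon}})_k\,d\sigma$, while the integral over $Q\setminus\mathrm{cl}\Omega_{p,\epsilon}$ splits into a contribution on the outer faces $\partial Q$ and one on $\partial\Omega_{p,\epsilon}$ (with reversed orientation). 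Using the periodicity relations (third and fourth conditions in \eqref{bvpe}), the fluxes through opposite faces of the cell differ exactly by $\delta_{k,j}$, so the $\partial Q$ contribution equals $\delta_{k,j}$ and produces the constant $\lambda^-\delta_{k,j}$. This gives
\begin{equation*}
\begin{split}
\lambda^{\mathrm{eff}}_{kj}[\epsilon]={}&\lambda^-\delta_{k,j}+\lambda^+\int_{\partial\Omega_{p,\epsilon}}u^+_j[\epsilon]\,(\nu_{\Omega_{p,\epsilon}})_k\,d\sigma\\
&-\lambda^-\int_{\partial\Omega_{p,\epsilon}}u^-_j[\epsilon]\,(\nu_{\Omega_{p,\epsilon}})_k\,d\sigma\,.
\end{split}
\end{equation*}

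Next I would desingularize. Writing $u^\pm_j[\epsilon](x)=x_j+w^\pm_j(x)$, where $w^\pm_j$ is periodic and harmonic, and rescaling by $x=p+\epsilon t$ with $t\in\partial\Omega$, I set $\tilde w^\pm_j(t)\equiv\epsilon^{-1}w^\pm_j(p+\epsilon t)$. Then $d\sigma_x=\epsilon^{n-1}\,d\sigma_t$ and $\nu_{\Omega_{p,\epsilon}}(p+\epsilon t)=\nu_\Omega(t)$, while $\int_{\partial\Omega}(\nu_\Omega)_k\,d\sigma=0$ and $\int_{\partial\Omega}t_j(\nu_\Omega)_k\,d\sigma=\delta_{k,j}|\Omega|_n$ by the Divergence Theorem. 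Hence each boundary integral carries a factor $\epsilon^n$, and after comparison with \eqref{eq:S0} and division by $\epsilon^n$ I obtain
\begin{equation*}
\begin{split}
\Lambda_{kj}\Bigl[\epsilon,\tfrac{\epsilon}{\rho(\epsilon)}\Bigr]={}&(\lambda^+-\lambda^-)|\Omega|_n\,\delta_{k,j}\\
&+\lambda^+\int_{\partial\Omega}\tilde w^+_j\,(\nu_\Omega)_k\,d\sigma\\
&-\lambda^-\int_{\partial\Omega}\tilde w^-_j\,(\nu_\Omega)_k\,d\sigma\,,
\end{split}
\end{equation*}
which already has the shape of the claimed formula; the additive constant in the normalization of $\tilde w^\pm_j$ is immaterial here, precisely because $\int_{\partial\Omega}(\nu_\Omega)_k\,d\sigma=0$.

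It then remains to let $\epsilon\to 0^+$, $\epsilon/\rho(\epsilon)\to r_\ast$, and to identify $\lim\tilde w^\pm_j$ with $(\tilde u^+_j,\tilde u^-_j)$. For this I would insert the decomposition $u^\pm_j=x_j+w^\pm_j$ into the fifth and sixth transmission conditions of \eqref{bvpe} and rescale: the linear term $x_j$ contributes the data $(\lambda^+-\lambda^-)(\nu_\Omega)_j$ and $-\lambda^+(\nu_\Omega)_j$, the factor $1/\rho(\epsilon)$ combines with the $\epsilon$ from the rescaling into $\epsilon/\rho(\epsilon)$, and in the limit one recovers exactly the two interface conditions of the transmission problem in Theorem~\ref{L0rast}. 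The zero-average and the behaviour-at-infinity conditions arise in the same limit, the exterior domain $\mathbb{R}^n\setminus\mathrm{cl}\Omega$ appearing because the neighbouring inclusions recede to distance $\epsilon^{-1}$, so that the periodic single-layer kernel rescales to the fundamental solution of $\Delta$.

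The main obstacle is this last step: justifying rigorously that $\tilde w^\pm_j$ converges, say in $C^1$, to the solution $(\tilde u^+_j,\tilde u^-_j)$ of the limiting problem, and that the real analytic map $\Lambda_{kj}$ of the first theorem, evaluated at $(0,r_\ast)$, is indeed computed by this limiting configuration. Here the integral-equation formulation on the fixed boundary $\partial\Omega$ and the Implicit Function Theorem announced in the \emph{Strategy} section carry the weight: one verifies that setting $\epsilon=0$ in the desingularized integral equations yields precisely the integral equation attached to the limiting transmission problem, whose unique solvability (including the decay at infinity) must be established separately. Granting this, the displayed formula for $\Lambda_{kj}[0,r_\ast]$ follows by continuity and coincides with the expression in Theorem~\ref{L0rast}.
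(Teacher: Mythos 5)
The paper itself contains no proof of this theorem: it states the result and refers to the forthcoming manuscript \cite{DaMu12a}, offering only the strategy sketch (periodic potential theory, desingularized integral equations on $\partial\Omega$, Implicit Function Theorem). So your proposal must be judged on its own merits. Its computational skeleton is correct and well chosen: the divergence theorem plus the periodicity conditions do give the exact identity
\begin{equation*}
\lambda^{\mathrm{eff}}_{kj}[\epsilon]=\lambda^-\delta_{k,j}
+\lambda^+\int_{\partial\Omega_{p,\epsilon}}u^+_j[\epsilon]\,(\nu_{\Omega_{p,\epsilon}})_k\,d\sigma
-\lambda^-\int_{\partial\Omega_{p,\epsilon}}u^-_j[\epsilon]\,(\nu_{\Omega_{p,\epsilon}})_k\,d\sigma\,;
\end{equation*}
the rescaling, together with $\int_{\partial\Omega}(\nu_\Omega)_k\,d\sigma=0$ and $\int_{\partial\Omega}t_j(\nu_\Omega(t))_k\,d\sigma_t=\delta_{k,j}|\Omega|_n$, extracts the factor $\epsilon^n$ exactly; and since $\Lambda_{kj}$ is real analytic, hence continuous at $(0,r_\ast)$, while $(\epsilon,\epsilon/\rho(\epsilon))\to(0,r_\ast)$, it is legitimate to identify $\Lambda_{kj}[0,r_\ast]$ with the limit of your boundary-integral expression. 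This correctly reduces the theorem to one statement: the rescaled solutions converge on $\partial\Omega$ to the solution of the limiting transmission problem.

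That one statement, however, is the entire analytic content of the theorem, and you leave it unproven; invoking ``the integral-equation formulation and the Implicit Function Theorem carry the weight'' is a pointer to the missing work, not an argument. Concretely, closing the gap requires: representing the solutions by periodic layer potentials; showing that the desingularized system of integral equations at $\epsilon=0$, parameter $r_\ast$, is exactly the system attached to the limiting problem and is uniquely solvable (well-posedness of that limiting problem — which for $r_\ast\neq 0$ looks overdetermined, since it imposes \emph{two} zero-average conditions on what is a one-parameter solution family; consistency holds only because integrating the jump condition over $\partial\Omega$ and using $\int_{\partial\Omega}\partial_\nu\tilde u^+_j\,d\sigma=0$ forces the two boundary means to coincide — you defer this as well); and then deducing convergence of the densities and of the boundary integrals. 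There is also a technical error in the convergence statement as written: with the normalization $\int_{\partial\Omega_{p,\epsilon}}u^+_j[\epsilon]\,d\sigma=0$, the function $w^+_j=u^+_j[\epsilon]-x_j$ carries a constant of size roughly $-p_j$, so $\tilde w^\pm_j=\epsilon^{-1}w^\pm_j(p+\epsilon\,\cdot)$ blows up like $\epsilon^{-1}$ and cannot converge in $C^1$. One must first subtract the boundary means (harmless for your formula, since $\int_{\partial\Omega}(\nu_\Omega)_k\,d\sigma=0$), and then verify that these subtracted constants do not perturb the sixth (jump) condition; they do not, but only because $\tfrac{\epsilon}{\rho(\epsilon)}$ times the mean of $\tilde w^-_j-\tilde w^+_j$ is proportional to $\int_{\partial\Omega_{p,\epsilon}}\partial_\nu u^+_j\,d\sigma=0$ by harmonicity — a cancellation your outline never addresses. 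Until these points are supplied, the proposal is a correct reduction plus an announcement, not a proof.
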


If we also assume that 
\[
r_\ast=\lim_{\epsilon\to 0^+} \frac{\epsilon}{\rho(\epsilon)}=0
\]
then the  expression for $\Lambda_{kj}[0,r_\ast]=\Lambda_{kj}[0,0]$ is simpler and we have the following.

\begin{proposition}\label{L00}
Let $r_\ast=0$. Let $(k,j)\in\{1,\dots,n\}^2$. Then 
\begin{equation}\label{eq:L2}
\begin{split}
\Lambda_{kj}[0,0]=&-\lambda^-\int_{ \partial \Omega}  \tilde{v}_j^{-} (t)(\nu_{\Omega}(t))_k\, d\sigma_t\\&-\lambda^-|\Omega|_n\delta_{k,j}\,,
\end{split}
\end{equation}  where $\tilde{v}_j^{-}$ is the unique solution in $C^{1}(\mathbb{R}^n\setminus\Omega)$ of the following  exterior Neumann problem
\begin{equation}\label{eq:lim4}
\left\{
\begin{array}{ll}
\Delta \tilde{v}_j^-=0 & {\mathrm{in}}\ \mathbb{R}^n\setminus\mathrm{cl}\Omega\,,
\\
\frac{\partial \tilde{v}_j^-}{\partial\nu_{ \Omega}}(x)=-(\nu_\Omega(x))_j& \forall x\in  \partial\Omega\,,\\
\int_{\partial \Omega}\tilde{v}_j^-(x)\, d\sigma_x=0\, ,\\
\lim_{x\to \infty}\tilde{v}_j^-(x)\in\mathbb{R}\, .
\end{array}
\right.
\end{equation}.
\end{proposition}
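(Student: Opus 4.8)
The plan is to begin from the expression for $\Lambda_{kj}[0,r_{\ast}]$ furnished by Theorem~\ref{L0rast} and to specialize it to the case $r_{\ast}=0$. The decisive observation is that setting $r_{\ast}=0$ annihilates the coupling term $r_{\ast}(\tilde{u}_j^{-}-\tilde{u}_j^{+})$ in the fourth interface condition of the limiting transmission problem, which therefore reduces to $\lambda^+\frac{\partial\tilde{u}_j^{+}}{\partial\nu_{\Omega}}=-\lambda^+(\nu_{\Omega})_j$ on $\partial\Omega$, that is $\frac{\partial\tilde{u}_j^{+}}{\partial\nu_{\Omega}}=-(\nu_{\Omega})_j$. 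Together with $\Delta\tilde{u}_j^{+}=0$ in $\Omega$ and the normalization $\int_{\partial\Omega}\tilde{u}_j^{+}\,d\sigma=0$, this shows that $\tilde{u}_j^{+}$ solves an interior Neumann problem that no longer involves the exterior component $\tilde{u}_j^{-}$.

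First I would solve this interior problem explicitly. Since the function $x\mapsto -x_j$ is harmonic and satisfies $\frac{\partial}{\partial\nu_{\Omega}}(-x_j)=-(\nu_{\Omega})_j$, the map $\tilde{u}_j^{+}(x)=-x_j+c_j$ is a solution for a suitable constant $c_j$ fixed by the zero-average condition $\int_{\partial\Omega}\tilde{u}_j^{+}\,d\sigma=0$. Inserting the identity $\frac{\partial\tilde{u}_j^{+}}{\partial\nu_{\Omega}}=-(\nu_{\Omega})_j$ into the third interface condition, a short computation gives $\lambda^-\frac{\partial\tilde{u}_j^{-}}{\partial\nu_{\Omega}}=-\lambda^-(\nu_{\Omega})_j$, whence $\frac{\partial\tilde{u}_j^{-}}{\partial\nu_{\Omega}}=-(\nu_{\Omega})_j$ on $\partial\Omega$. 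Comparing with problem~\eqref{eq:lim4}, and taking into account the remaining conditions (harmonicity in $\mathbb{R}^n\setminus\mathrm{cl}\Omega$, vanishing boundary average, and finite limit at infinity), I would identify $\tilde{u}_j^{-}$ with $\tilde{v}_j^{-}$. After checking that the candidate pair $(-x_j+c_j,\tilde{v}_j^{-})$ fulfils every condition of the transmission problem, the uniqueness asserted in Theorem~\ref{L0rast} guarantees that it coincides with $(\tilde{u}_j^{+},\tilde{u}_j^{-})$.

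It then remains to substitute these identifications into the formula of Theorem~\ref{L0rast}. The term carrying $\tilde{u}_j^{-}$ produces at once the first summand $-\lambda^-\int_{\partial\Omega}\tilde{v}_j^{-}(t)(\nu_{\Omega}(t))_k\,d\sigma_t$ of \eqref{eq:L2}. For the term carrying $\tilde{u}_j^{+}=-t_j+c_j$, I would apply the divergence theorem twice: the identity $\int_{\partial\Omega}(\nu_{\Omega}(t))_k\,d\sigma_t=0$ eliminates the constant $c_j$, while applying the divergence theorem to the vector field $t_j e_k$ gives $\int_{\partial\Omega}t_j(\nu_{\Omega}(t))_k\,d\sigma_t=\delta_{k,j}|\Omega|_n$. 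Hence $\lambda^+\int_{\partial\Omega}\tilde{u}_j^{+}(t)(\nu_{\Omega}(t))_k\,d\sigma_t=-\lambda^+\delta_{k,j}|\Omega|_n$. Collecting this with the remaining contribution $(\lambda^+-\lambda^-)|\Omega|_n\delta_{k,j}$ of Theorem~\ref{L0rast}, the two multiples of $|\Omega|_n\delta_{k,j}$ combine with coefficient $-\lambda^++(\lambda^+-\lambda^-)=-\lambda^-$, yielding precisely \eqref{eq:L2}.

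No step poses a genuine difficulty: the whole argument turns on the single structural remark that $r_{\ast}=0$ decouples the interior equation, after which matters reduce to the explicit affine solution of an interior Neumann problem and to two applications of the divergence theorem. The only points deserving mild care are verifying that the candidate pair satisfies all the interface conditions, so that the uniqueness in Theorem~\ref{L0rast} permits the identification, and keeping precise track of the cancellation among the terms proportional to $|\Omega|_n\delta_{k,j}$.
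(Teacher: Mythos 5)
Your proposal is correct and follows essentially the same route as the paper's own proof: specializing Theorem \ref{L0rast} to $r_\ast=0$ so that the transmission problem decouples, solving the interior Neumann problem by the explicit affine function $-x_j+c_j$, and then using the divergence theorem (giving $\int_{\partial\Omega}t_j(\nu_\Omega(t))_k\,d\sigma_t=\delta_{k,j}|\Omega|_n$) to reduce the formula to \eqref{eq:L2}. Your write-up is in fact slightly more explicit than the paper's, in that you verify directly that the decoupled pair satisfies all the interface conditions before invoking uniqueness.
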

\begin{proof} Since $r_\ast=0$ and by Theorem \ref{L0rast} one deduces that 
\[
\begin{split}
\Lambda_{kj}[0,0]=&\lambda^+\int_{\partial \Omega}  \tilde{v}_j^+ (t)(\nu_{\Omega}(t))_k\, d\sigma_t\\&-\lambda^-\int_{ \partial \Omega}  \tilde{v}_j^- (t)(\nu_{\Omega}(t))_k\, d\sigma_t\\
&+(\lambda^+-\lambda^-)|\Omega|_n\delta_{k,j}\,,
\end{split}
\]  where $\tilde{v}_j^{-}$ is the unique solution in $C^{1}(\mathbb{R}^n\setminus\Omega)$  of \eqref{eq:lim4} and where $\tilde{v}_j^+$ is the unique solution in $C^{1}(\mathrm{cl}\Omega)$ of
\[
\left\{
\begin{array}{ll}
\Delta \tilde{v}_j^+=0 & {\mathrm{in}}\ \Omega\,,\\
\frac{\partial\tilde{v}_j^+}{\partial\nu_{ \Omega}}(x)=-(\nu_{\Omega}(x))_j& \forall x\in  \partial\Omega\,,\\
\int_{\partial \Omega}\tilde{v}_j^+(x)\, d\sigma_x=0\, .
\end{array}
\right.
\] It follows that 
\[
\tilde{v}_j^+(x)=-x_j+\frac{1}{|\partial\Omega|_{n-1}}\int_{\partial\Omega}t_j\,d\sigma_{t} \quad \forall x \in \mathrm{cl}\Omega\,.
\] Then the divergence theorem implies that  
\[
\int_{\partial \Omega}  \tilde{v}_j^+ (t)(\nu_{\Omega}(t))_k\, d\sigma_t=-|\Omega|_n\delta_{k,j}\,.
\] Now the validity of the proposition follows by a straightforward calculation. \end{proof}

\medskip

If we further assume that $\Omega$ is the unit ball $\mathbb{B}_n$ in $\mathbb{R}^n$, then we have the following.

\begin{proposition}\label{L00ball}
Let $r_\ast=0$. Assume that $\Omega=\mathbb{B}_n$. Let $(k,j)\in\{1,\dots,n\}^2$.  Then 
\[
\Lambda_{kj}[0,0]=-\lambda^-\frac{ s_{n}}{n-1}\; \delta_{k,j}
\] where $s_n$ denotes the $(n-1)$-dimensional measure of $\partial\mathbb{B}_n$.
\end{proposition}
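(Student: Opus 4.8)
The plan is to specialize Proposition \ref{L00} to the case $\Omega=\mathbb{B}_n$, so the entire task reduces to solving the exterior Neumann problem \eqref{eq:lim4} explicitly on the complement of the unit ball and then evaluating the boundary integral $\int_{\partial\mathbb{B}_n}\tilde{v}_j^-(t)(\nu_\Omega(t))_k\,d\sigma_t$. On the unit sphere the outward unit normal is simply $\nu_\Omega(x)=x$, so the Neumann datum in \eqref{eq:lim4} becomes $\frac{\partial\tilde{v}_j^-}{\partial\nu_\Omega}(x)=-x_j$ for $x\in\partial\mathbb{B}_n$. The natural ansatz is to look for a harmonic function in $\mathbb{R}^n\setminus\mathrm{cl}\mathbb{B}_n$ that decays at infinity and matches this first-spherical-harmonic datum; that is, I would try $\tilde{v}_j^-(x)=c\,x_j/|x|^n$, which is harmonic away from the origin (it is a derivative of the fundamental solution), vanishes as $x\to\infty$, has zero average on any sphere centered at $0$ by symmetry, and therefore automatically satisfies the two side conditions $\int_{\partial\mathbb{B}_n}\tilde{v}_j^-\,d\sigma=0$ and $\lim_{x\to\infty}\tilde{v}_j^-\in\mathbb{R}$.

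First I would fix the constant $c$ by imposing the Neumann condition. Computing the radial derivative of $c\,x_j|x|^{-n}$ and evaluating at $|x|=1$ gives $\frac{\partial}{\partial|x|}\big(c\,x_j|x|^{-n}\big)\big|_{|x|=1}=c\,(1-n)x_j$, and since the outward normal on $\partial\mathbb{B}_n$ is the radial direction, matching $-x_j$ forces $c(1-n)=-1$, i.e. $c=\frac{1}{n-1}$. Thus $\tilde{v}_j^-(x)=\frac{1}{n-1}\,\frac{x_j}{|x|^n}$, and by the uniqueness asserted in Proposition \ref{L00} this is \emph{the} solution of \eqref{eq:lim4}.

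Next I would evaluate the integral in \eqref{eq:L2}. On $\partial\mathbb{B}_n$ we have $|t|=1$, so $\tilde{v}_j^-(t)=\frac{1}{n-1}t_j$ there, and the $k$-th component of the normal is $(\nu_\Omega(t))_k=t_k$. Hence
\[
\int_{\partial\mathbb{B}_n}\tilde{v}_j^-(t)(\nu_\Omega(t))_k\,d\sigma_t=\frac{1}{n-1}\int_{\partial\mathbb{B}_n}t_j t_k\,d\sigma_t=\frac{1}{n-1}\cdot\frac{s_n}{n}\,\delta_{k,j}\,,
\]
using the standard spherical integral $\int_{\partial\mathbb{B}_n}t_jt_k\,d\sigma_t=\frac{s_n}{n}\delta_{k,j}$. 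Finally I would substitute this into \eqref{eq:L2} together with $|\mathbb{B}_n|_n=s_n/n$, obtaining
\[
\Lambda_{kj}[0,0]=-\lambda^-\Bigl(\frac{1}{n-1}\cdot\frac{s_n}{n}\Bigr)\delta_{k,j}-\lambda^-\,\frac{s_n}{n}\,\delta_{k,j}=-\lambda^-\frac{s_n}{n}\Bigl(\frac{1}{n-1}+1\Bigr)\delta_{k,j}=-\lambda^-\frac{s_n}{n-1}\,\delta_{k,j}\,,
\]
which is exactly the claimed formula. I do not anticipate any serious obstacle here: the only genuinely substantive point is the guess for the harmonic ansatz and the correct normalization of $c$; once the explicit solution is in hand, everything reduces to the elementary spherical-moment identity and bookkeeping, and uniqueness (granted by Proposition \ref{L00}) guarantees that the ansatz is the solution rather than merely a solution.
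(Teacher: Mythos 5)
Your proof is correct and follows essentially the same route as the paper: both identify the explicit solution $\tilde{v}_j^-(x)=\frac{1}{n-1}\,x_j/|x|^n$ of the exterior Neumann problem \eqref{eq:lim4}, evaluate the boundary integral, and substitute into \eqref{eq:L2}. The only cosmetic difference is that you compute $\int_{\partial\mathbb{B}_n}t_jt_k\,d\sigma_t$ via the spherical moment identity while the paper invokes the divergence theorem (yielding the equivalent value $\frac{1}{n-1}|\mathbb{B}_n|_n\,\delta_{k,j}$), which amounts to the same elementary calculation.
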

\begin{proof} By assumption $\Omega=\mathbb{B}_n$, one verifies that the unique solution of problem \eqref{eq:lim4} is given by
\[
\tilde{v}_j^-(x)\equiv \frac{1}{n-1}\; \frac{x_j}{|x|^n}\qquad\forall x\in\mathbb{R}^n\setminus\mathbb{B}_n\,.
\] Then, by the divergence theorem one has
\[
\int_{\partial \mathbb{B}_n}  \tilde{v}_j^- (t)(\nu_{\mathbb{B}_n}(t))_k\, d\sigma_t=\frac{1}{n-1}|\mathbb{B}_n|_n\;\delta_{k,j}
\] where $|\mathbb{B}_n|_n$ denotes the $n$-dimensional measure of $\mathbb{B}_n$. Now the validity of the proposition follows by equality \eqref{eq:L2} and by a straightforward calculation (also note that $s_n=n|\mathbb{B}_n|_n$). \end{proof}

\section{Further remarks}

We observe that we can investigate also the asymptotic behaviour of suitable restrictions of the functions $u^+_j[\epsilon]$ and $u^-_j[\epsilon]$ as $\epsilon$ tends to $0$. Moreoveor, we can analyse also the case in which we add to the fifth and sixth conditions in \eqref{bvpe} suitable functions defined on $\partial \Omega_{p,\epsilon}$, and thus we consider non-homogeneous boundary conditions.

\section*{Acknowledgments}
The first named author acknowledges financial support from the Foundation for Science and Technology (FCT) via the post-doctoral grant SFRH/BPD/64437/2009. The work of the first named author was also supported by {\it FEDER} funds through {\it COMPETE}--Operational Programme Factors of Competitiveness (``Programa Operacional Factores de Competitividade'') and by Portuguese funds through the {\it Center for Research and Development in Mathematics and Applications} (University of Aveiro) and the Portuguese Foundation for Science and Technology (``FCT--Funda\c{c}\~{a}o para a Ci\^{e}ncia e a Tecnologia''), within project PEst-C/MAT/UI4106/2011 with COMPETE number FCOMP-01-0124-FEDER-022690.  The second named author acknowledges the financial support of the ``Fondazione Ing.~Aldo Gini''.

\end{document}